\newtheorem{theorem}{Theorem}
\newtheorem{lemma}{Lemma}
\newtheorem{proposition}{Proposition}
\newtheorem{remark}{Remark}
\newtheorem{definition}{Definition}
\theoremstyle{remark}
\newtheorem{example}{\textbf{Example}}
\title{Linear divisibility sequences and Salem numbers}
\author{Marco Abrate, Stefano Barbero, Umberto Cerruti, Nadir Murru\\ \\
Department of Mathematics, University of Turin\\
Via Carlo Alberto 10, 10122, Turin, ITALY\\ \\
marco.abrate@unito.it, stefano.barbero@unito.it, umberto.cerruti@unito.it,\\ nadir.murru@unito.it}
\date{}
\begin{document}

\maketitle

\begin{abstract}
We study linear divisibility sequences of order 4, providing a characterization
by means of their characteristic polynomials and finding their factorization
as a product of linear divisibility sequences of order 2. Moreover,
we show a new interesting connection between linear divisibility sequences
and Salem numbers. Specifically, we generate linear divisibility sequences
of order 4 by means of Salem numbers modulo 1. 
\end{abstract}

\section{Introduction}

A sequence $a=(a_{n})_{n=0}^{\infty}$ is a divisibility sequence
if $m|n$ implies $a_{m}|a_{n}$. Divisibility sequences that satisfy
a linear recurrence relation are particularly studied. A classic example
of linear divisibility sequence is the Fibonacci sequence. During
the years linear divisibility sequences of order 2 have been deeply
studied, see, e.g., \cite{Hor} and \cite{Nor}. Hall \cite{Hall}
studied divisibility sequences of order 3 and Bezivin et al. \cite{Bez}
have obtained more general results. Divisibility sequences are very
interesting for their beautiful properties. For example, many studies
can be found about their connection with elliptic curves \cite{Ward},
\cite{Ing}. Further results on divisibility sequences can be found,
e.g, in \cite{Cor} where Cornelissen and Reynolds investigate matrix
divisibility sequences, and in \cite{Yal} where Horak and Skula
characterize the second--order strong divisibility sequences.

Recently, linear divisibility sequences of order 4 have been deeply
examined. In particular, Williams and Guy \cite{Guy}, \cite{Guy2}
introduced and studied a class of linear divisibility sequences of
order 4 that extends the Lehmer--Lucas theory for divisibility sequences
of order 2. In section \ref{sec:standard}, we consider these sequences
proving that all (non degenerate) divisibility sequences of order
4 have characteristic polynomial equals to the characteristic polynomial
of sequences of Williams and Guy. Moreover, we provide all factorizations
of divisibility sequences of order 4 into the product of divisibility
sequences of order 2.

In section \ref{sec:salem}, we generate linear divisibility sequences
of order 4 by means of powers of Salem numbers. This result is particularly
intriguing, since connections between Salem numbers and divisibility
sequences have been never highlighted. Moreover, the construction
of divisibility sequences by means of powers of algebraic integers
is an interesting research field that have been recently developed \cite{Sil}.

\section{Standard linear divisibility sequences}

\label{sec:standard} \begin{definition} Given a ring $\mathcal{R}$,
a sequence $a=(a_{n})_{n=0}^{\infty}$ over $\mathcal{R}$ is a \emph{divisibility
sequence} if 
\[
m|n\Rightarrow a_{m}|a_{n}.
\]
Conventionally, we will consider $a_{0}=0$. \end{definition}

In the following, we will deal with \emph{linear divisibility sequences}
(LDSs), i.e., divisibility sequences that satisfy a linear recurrence.
Classic LDSs are the Lucas sequences, i.e., the linear recurrence
sequences with characteristic polynomial $x^{2}-hx+k$ and initial
conditions $0,1$.

In \cite{Guy} and \cite{Guy2}, the authors introduced and studied
some linear divisibility sequences of order 4. We recall these sequences
in the following definition.

\begin{definition} \label{standard} Let us consider linear recurrence
sequences of order 4 over $\mathbb{Z}$ with characteristic polynomial
\[
x^{4}-px^{3}+(q+2r)x^{2}-prx+r^{2}
\]
and initial conditions 
\[
0,1,p,p^{2}-q-3r.
\]
We say that these sequences are \emph{standard} LDSs of order 4 and
we call the previous polynomial as \emph{standard polynomial}. \end{definition}
In the next theorem, we prove that the product of two LDSs of order
2 is a standard LDS of order 4. First, we need the following lemma
proved in \cite{Cer}. 

\begin{lemma} \label{kron} Let $a=(a_{n})_{n=0}^{\infty}$
and $b=(b_{n})_{n=0}^{\infty}$ be linear recurrence sequences with
characteristic polynomials $f(x)$ and $g(x)$, respectively. The
sequence $ab=(a_{n}b_{n})_{n=0}^{\infty}$ is a linear recurrence
sequence that recurs with $f(x)\otimes g(x)$, the characteristic
polynomial of the matrix $F\otimes G$ (Kronecker product of matrices),
where $F$ and $G$ are the companion matrices of $f(x)$ and $g(x)$,
respectively. \end{lemma}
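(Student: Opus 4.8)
The plan is to realize each term of the two sequences as a fixed bilinear form in the powers of its companion matrix, and then to exploit two facts about the Kronecker product: that it is multiplicative under matrix powers, and that, by Cayley--Hamilton, the product matrix is annihilated by its own characteristic polynomial. First I would set up the matrix representation. Writing $d=\deg f$ and letting $F$ be the companion matrix of $f(x)$, the state vectors $s_{n}=(a_{n},a_{n+1},\ldots,a_{n+d-1})^{T}$ satisfy $s_{n+1}=Fs_{n}$, hence $s_{n}=F^{n}s_{0}$ and consequently $a_{n}=e_{1}^{T}F^{n}s_{0}$, where $e_{1}$ is the first coordinate vector. Doing the same for $b$ with its companion matrix $G$ yields $b_{n}=e_{1}^{T}G^{n}t_{0}$, with $t_{0}$ the vector of initial values of $b$. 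The only thing to verify here is that the shift relation $s_{n+1}=Fs_{n}$ holds for every $n\ge 0$, which is precisely the recurrence together with the trivial identities among the shifted coordinates.

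Next I would combine the two representations. Since a product of two scalars is the Kronecker product of two $1\times 1$ matrices, and since $(u\otimes v)^{T}=u^{T}\otimes v^{T}$, I would write
\[
a_{n}b_{n}=\big(e_{1}^{T}F^{n}s_{0}\big)\big(e_{1}^{T}G^{n}t_{0}\big)=(e_{1}\otimes e_{1})^{T}(F^{n}\otimes G^{n})(s_{0}\otimes t_{0}).
\]
The key algebraic fact is the mixed--product property $(A\otimes B)(C\otimes D)=(AC)\otimes(BD)$, which by induction gives $(F\otimes G)^{n}=F^{n}\otimes G^{n}$ for all $n$. Hence
\[
a_{n}b_{n}=(e_{1}\otimes e_{1})^{T}(F\otimes G)^{n}(s_{0}\otimes t_{0}),
\]
so the product sequence is a single fixed linear functional evaluated on the powers of the one matrix $M=F\otimes G$.

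Finally I would invoke Cayley--Hamilton. Letting $h(x)=\det(xI-M)$ be the characteristic polynomial of $M$, we have $h(M)=0$, so every entry of the matrix sequence $(M^{n})_{n}$ satisfies the linear recurrence whose characteristic polynomial is $h$. Because $a_{n}b_{n}$ is a fixed linear combination of the entries of $M^{n}$, the sequence $ab=(a_{n}b_{n})_{n}$ satisfies that same recurrence; and $h(x)$ is by the very definition in the statement the polynomial $f(x)\otimes g(x)$. This simultaneously shows that $ab$ is a linear recurrence sequence and that it recurs with $f\otimes g$.

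I expect the difficulty to be organizational rather than conceptual: one must ensure the recurrence annihilates the product for all indices (not merely for large $n$), and one should note that $h$ is asserted only as an annihilating polynomial, not necessarily the minimal one. Indeed, coincidences among the products $\lambda_{i}\mu_{j}$ of the roots of $f$ and $g$ can make the true minimal polynomial a proper divisor of $f\otimes g$, but this does not weaken the statement. Routing the argument through $M^{n}$ and Cayley--Hamilton is precisely what sidesteps the delicate multiplicity analysis that a direct root--by--root expansion $a_{n}b_{n}=\sum_{i,j}P_{i}(n)Q_{j}(n)(\lambda_{i}\mu_{j})^{n}$ would otherwise force, since that route requires controlling the degrees of the polynomial coefficients whenever eigenvalue products collide.
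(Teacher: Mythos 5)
Your proof is correct. Note first that the paper itself offers no proof of this lemma: it is quoted as "proved in \cite{Cer}" (Cerruti--Vaccarino), where the result is obtained inside the general framework of the $R$-algebra of linear recurrent sequences over a commutative ring, with the Hadamard (termwise) product handled through tensor-product machinery. Your argument is a self-contained, elementary alternative: realize $a_{n}=e_{1}^{T}F^{n}s_{0}$ and $b_{n}=e_{1}^{T}G^{n}t_{0}$, use the mixed-product property to get $a_{n}b_{n}=(e_{1}\otimes e_{1})^{T}(F\otimes G)^{n}(s_{0}\otimes t_{0})$, and apply Cayley--Hamilton to $M=F\otimes G$, so that $h(M)M^{n}=0$ annihilates the product for \emph{every} $n\geq 0$, exactly as the statement requires. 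This buys essentially the same generality as the cited reference at much lower cost: Cayley--Hamilton holds over any commutative ring, so your proof is not confined to $\mathbb{Z}$ or a field, and, as you correctly observe, routing through $M^{n}$ sidesteps the multiplicity bookkeeping that a root-by-root expansion $\sum_{i,j}P_{i}(n)Q_{j}(n)(\lambda_{i}\mu_{j})^{n}$ would demand when eigenvalue products collide (the paper's Remark \ref{rem:kron} states the root description only in the separable situation actually used later). Two small points deserve a sentence in a polished write-up: the shift relation $s_{n+1}=Fs_{n}$ holds for the companion matrix with the coefficients placed in the last row, which may be the transpose of one's preferred convention --- harmless, since a matrix and its transpose share the same characteristic polynomial; and, as you already note, $f\otimes g$ is only an annihilating polynomial, with the minimal polynomial possibly a proper divisor, which is all the lemma asserts.
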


\begin{remark} \label{rem:kron}
The previous lemma can be also stated as follows. Let $a=(a_{n})_{n=0}^{\infty}$ and $b=(b_{n})_{n=0}^{\infty}$ be linear recurrence sequences whose characteristic polynomials have roots $\alpha_1,...,\alpha_s$ and $\beta_1,...,\beta_t$, respectively. Then, the sequence $c=(c_{n})_{n=0}^{\infty}=(a_{n}b_{n})_{n=0}^{\infty}$ is also a linear recurrence sequence whose characteristic polynomial has roots $\gamma_1,...,\gamma_{st}$, where
$$(\gamma_1,...,\gamma_{st}) = (\alpha_1,...,\alpha_s)\otimes(\beta_1,...,\beta_t),$$
\end{remark}

\begin{theorem} Let $a=(a_{n})_{n=0}^{\infty}$ and $b=(b_{n})_{n=0}^{\infty}$
be LDSs of order 2 with characteristic polynomials $x^{2}-h_{1}x+k_{1}$,
$x^{2}-h_{2}x+k_{2}$, respectively, and initial conditions $0,1$.
The sequence $ab=(a_{n}b_{n})_{n=0}^{\infty}$ is a standard LDS of
order 4 with initial conditions $0,1,h_{1}h_{2},(h_{1}^{2}-k_{1})(h_{2}^{2}-k_{2})$.
\end{theorem}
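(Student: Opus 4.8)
The plan is to use the root description of Remark~\ref{rem:kron} to pin down the characteristic polynomial of $ab$, and then directly verify that both the polynomial and the first four terms fit the shape prescribed in Definition~\ref{standard}.

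First I would let $\alpha_1,\alpha_2$ and $\beta_1,\beta_2$ denote the roots of $x^2-h_1x+k_1$ and $x^2-h_2x+k_2$, so that $\alpha_1+\alpha_2=h_1$, $\alpha_1\alpha_2=k_1$, $\beta_1+\beta_2=h_2$, $\beta_1\beta_2=k_2$. By Lemma~\ref{kron} in the form of Remark~\ref{rem:kron}, the sequence $ab$ is a linear recurrence sequence of order $4$ whose characteristic polynomial has the four products $\alpha_i\beta_j$, for $i,j\in\{1,2\}$, as roots. The next step is to compute the elementary symmetric functions $e_1,\dots,e_4$ of these products and express them through $h_1,h_2,k_1,k_2$. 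Using $\alpha_1^2+\alpha_2^2=h_1^2-2k_1$ and $\beta_1^2+\beta_2^2=h_2^2-2k_2$, one finds $e_1=h_1h_2$, $e_2=h_1^2k_2+h_2^2k_1-2k_1k_2$, $e_3=h_1h_2k_1k_2$, and $e_4=k_1^2k_2^2$, so the characteristic polynomial of $ab$ is
\[
x^4-h_1h_2\,x^3+(h_1^2k_2+h_2^2k_1-2k_1k_2)\,x^2-h_1h_2k_1k_2\,x+k_1^2k_2^2 .
\]

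I would then match this against the standard polynomial $x^4-px^3+(q+2r)x^2-prx+r^2$. Setting $p=h_1h_2$ and $r=k_1k_2$ makes the $x^3$, $x$, and constant coefficients agree at once (the identities $pr=h_1h_2k_1k_2$ and $r^2=k_1^2k_2^2$ are automatic), and reading off the $x^2$ coefficient forces $q=h_1^2k_2+h_2^2k_1-4k_1k_2$. This exhibits the characteristic polynomial of $ab$ as a standard polynomial for these values of $p,q,r$.

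It remains to handle the initial conditions. Since $a_0=b_0=0$, $a_1=b_1=1$, and the order-$2$ recurrences give $a_2=h_1$, $b_2=h_2$, $a_3=h_1^2-k_1$, $b_3=h_2^2-k_2$, the first four terms of $ab$ are $0,\,1,\,h_1h_2,\,(h_1^2-k_1)(h_2^2-k_2)$. To conclude that $ab$ is standard, I only need these to coincide with $0,1,p,p^2-q-3r$; the first three are immediate, and for the last one I would substitute the values of $p,q,r$ found above and verify $p^2-q-3r=(h_1^2-k_1)(h_2^2-k_2)$ by a short expansion. The only real (though still elementary) obstacle is the bookkeeping in the symmetric-function computation of $e_2$ and $e_3$; once these are correct, the identification of $p,q,r$ and the final initial-condition identity are routine algebra.
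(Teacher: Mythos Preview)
Your argument is correct and follows essentially the same route as the paper: invoke Lemma~\ref{kron}/Remark~\ref{rem:kron} to obtain the degree-$4$ characteristic polynomial of $ab$, identify $p=h_1h_2$, $r=k_1k_2$, $q=h_1^2k_2+h_2^2k_1-4k_1k_2$, and check the initial conditions. The only small addition worth making is the one-line remark (present in the paper) that $ab$ is automatically a divisibility sequence because $a$ and $b$ are.
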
 \begin{proof} Since $a$ and $b$ are LDSs, it immediately
follows that $ab$ is a divisibility sequence and by Lemma \ref{kron},
we know that it is a linear recurrence sequence of order 4 whose characteristic
polynomial is 
\begin{equation*} 
x^{4}-h_{1}h_{2}x^{3}+(k_{1}h_{1}^{2}-k_{2}h_{1}^{2}+2k_{1}k_{2})x^{2}+h_{1}k_{1}h_{2}k_{2}x+k_{1}^{2}k_{2}^{2}.
\end{equation*}
By Definition \ref{standard}, $ab$ is a standard LDS for $p=h_{1}h_{2}$,
$q=h_{1}^{2}k_{2}+k_{1}(h_{2}^{2}-4k_{2})$, $r=k_{1}k_{2}$. The
initial conditions can be directly calculated. \end{proof}

Moreover, we prove that all the LDSs of order 4 have characteristic
polynomial equals to the characteristic polynomial of standard LDSs.

\begin{theorem} Let $a=(a_{n})_{n=0}^{\infty}$ be a non degenerate
LDS of order 4 with $a_{0}=0$ and $a_{1}=1$, then its characteristic
polynomial is 
\begin{equation} \label{eq:pol}
x^{4}-px^{3}+(q+2r)x^{2}-prx+r^{2}
\end{equation}
for some $p,q,r$. \end{theorem}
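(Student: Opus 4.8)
The plan is to pass to the roots of the characteristic polynomial and to reduce the statement to a single multiplicative symmetry among them. Write the (minimal) characteristic polynomial as $f(x) = x^4 - c_1 x^3 + c_2 x^2 - c_3 x + c_4$; non-degeneracy guarantees that the recurrence genuinely has order $4$, so I may take the four roots $\alpha_1,\alpha_2,\alpha_3,\alpha_4$ to be distinct and write the Binet form $a_n = \sum_{i=1}^4 A_i \alpha_i^n$, with $\sum_i A_i = a_0 = 0$. Comparing $f$ with the standard polynomial of Definition \ref{standard}, the claim is equivalent to the existence of $r$ with $c_4 = r^2$ and $c_3 = c_1 r$ (then one simply sets $p = c_1$ and $q = c_2 - 2r$). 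Since $c_4 = \alpha_1\alpha_2\alpha_3\alpha_4$ and $c_3 = c_4\sum_i \alpha_i^{-1}$, these two relations hold precisely when the roots can be labelled so that $\alpha_1\alpha_4 = \alpha_2\alpha_3 = r$; equivalently, $f$ factors as a product of two monic quadratics sharing the same constant term $r$, namely $f(x) = (x^2 - (\alpha_1+\alpha_4)x + r)(x^2 - (\alpha_2+\alpha_3)x + r)$. Thus everything reduces to proving that the four characteristic roots split into two pairs of equal product.

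The second, and decisive, step is to extract this pairing from the divisibility hypothesis. My approach is to exploit the two-sided extension of the sequence: using the recurrence to define $a_n$ for $n<0$, one still has $a_{-n} = \sum_i A_i \alpha_i^{-n}$, a linear recurrence sequence whose characteristic roots are the $\alpha_i^{-1}$. The target pairing is exactly the statement that the root set is invariant under the involution $\alpha \mapsto r/\alpha$, equivalently that $(r^n a_{-n})_n$ is proportional to $(a_n)_n$. I would establish this reciprocal symmetry $a_{-n} = \varepsilon\, r^{-n} a_n$ directly from divisibility, modelling it on the order-$2$ identity $U_{-n} = -k^{-n}U_n$ satisfied by every Lucas sequence: the divisibility relations $a_m \mid a_n$ for $m\mid n$ control the ranks of apparition of primes and force the zero pattern of $(a_n)$ modulo each prime to be symmetric, which upgrades to the asserted functional equation for the genuine integer sequence. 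Feeding $a_{-n} = \varepsilon r^{-n}a_n$ back into the Binet expansion pairs each root $\alpha_i$ with a root $r/\alpha_i$, yielding the two pairs of common product $r$.

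With the pairing in hand the theorem follows immediately: $r = \alpha_1\alpha_4$ is a rational algebraic integer, hence $r \in \mathbb{Z}$, and the factorization above gives $c_1 = (\alpha_1+\alpha_4)+(\alpha_2+\alpha_3)$, $c_2 = 2r + (\alpha_1+\alpha_4)(\alpha_2+\alpha_3)$, $c_3 = r c_1$ and $c_4 = r^2$, so that $f$ has the form \eqref{eq:pol} with $p = c_1$ and $q = (\alpha_1+\alpha_4)(\alpha_2+\alpha_3)$. I expect the main obstacle to be precisely the middle step: translating the arithmetic divisibility $a_m\mid a_n$ into the algebraic reciprocal symmetry of the root set. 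The reduction of the first paragraph is purely formal, and the conclusion of the third is routine symmetric-function bookkeeping; by contrast, ruling out root configurations that are arithmetically divisibility-compatible yet fail to pair up is the crux, and is where I would expect to lean on the general structural theory of divisibility sequences (as in the results cited in the introduction) should the direct rank-of-apparition argument prove delicate.
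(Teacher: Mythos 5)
Your first paragraph is a correct and clean reduction: the polynomial has the form \eqref{eq:pol} exactly when the four roots split into two pairs with a common product $r$ (indeed $c_1^{2}c_4-c_3^{2}=\prod(\alpha_i\alpha_j-\alpha_k\alpha_l)$ over the three pairings, so your ``precisely when'' is right), and the third paragraph is routine. The problem is that the entire content of the theorem sits in your second step, and that step is not proved --- it is only a plan. You propose to derive the exact functional equation $a_{-n}=\varepsilon r^{-n}a_n$ from divisibility via ranks of apparition. Two concrete objections. First, this identity is \emph{strictly stronger} than the theorem: writing $a_n=\sum A_i\alpha_i^n$, the equation $r^na_{-n}=\varepsilon a_n$ forces not only the pairing of the roots but also the pairing of the Binet coefficients ($A$ with $D$, $B$ with $C$, up to the sign $\varepsilon$), which is a constraint on the initial values $a_2,a_3$ that the theorem does not assert and that need not hold for an arbitrary LDS with $a_0=0$, $a_1=1$. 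You would therefore be trying to prove more than is true in general, or at least more than is needed. Second, the proposed mechanism does not support the conclusion: the hypothesis $m\mid n\Rightarrow a_m\mid a_n$ tells you that the zero set of $a$ modulo a prime is closed under taking multiples; it gives no direct information about closure under negation, and even granting symmetric zero sets modulo every prime, two integer sequences can share all such zero sets without being related by an exact identity of the form $a_{-n}=\varepsilon r^{-n}a_n$. So the ``upgrade'' from local zero-pattern data to a global functional equation is a leap, not a step, and it is exactly where the difficulty of the theorem lives.

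For comparison, the paper closes this gap by importing a structural theorem (Bezivin--Petho--van der Poorten, and the preprint of Barbero): every non-degenerate LDS divides, term by term, the product of the six Lucas-type quotients $\prod_{i<j}(\alpha_i^n-\alpha_j^n)/(\alpha_i-\alpha_j)$ built from its own roots. Writing that product as $a_nc_n$ and comparing the $64$-entry Kronecker vector of roots of the product with a factorization $A\otimes C$ in which $A$ must consist of the roots of $a$, they are forced into the relation $\alpha\delta=\beta\gamma$, which is your pairing. If you want to salvage your route, the realistic option is the same one: invoke the full characterization of divisibility sequences from the cited literature rather than a rank-of-apparition argument built from scratch; as written, the proposal has a genuine gap at its decisive step.
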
 \begin{proof} Let us suppose that
the characteristic polynomial of $a$ has distinct roots in order
to avoid degenerate sequences, i.e., ratio of roots are not roots
of unity. Let $\alpha$, $\beta$, $\gamma$, $\delta$ be these roots.

The sequence $a$ is a divisor of the sequence $b=(b_n)_{n=0}^\infty$, where
\[
b_n = \cfrac{\alpha^n-\beta^n}{\alpha-\beta}\cdot\cfrac{\alpha^n-\gamma^n}{\alpha-\gamma}\cdot\cfrac{\alpha^n-\delta^n}{\alpha-\delta}\cdot\cfrac{\beta^n-\gamma^n}{\beta-\gamma}\cdot\cfrac{\beta^n-\delta^n}{\beta-\delta}\cdot\cfrac{\gamma^n-\delta^n}{\gamma-\delta}.
\]
See \cite{Bar} and \cite{Bez}. In other words, there exist a sequence $c=(c_n)_{n=0}^\infty$ such that $b_n = a_nc_n$, for any index $n$.

By Lemma \ref{kron} and Remark \ref{rem:kron}, the sequence $p$ can be written as the product of six Lucas sequences with characteristic polynomials having roots $(\alpha, \beta)$, $(\alpha, \gamma)$, $(\alpha, \delta)$, $(\beta, \gamma)$, $(\beta, \delta)$, $(\gamma, \delta)$, respectively. Thus, the roots of the characteristic polynomial of $p$ are the entries of the following vector of length 64:
$$B = (\alpha, \beta)\otimes(\alpha,\gamma)\otimes(\alpha,\delta)\otimes(\beta,\gamma)\otimes(\beta,\delta)\otimes(\gamma,\delta),$$
where all the roots appear with the due multiplicity. We can write the vector $B$ as
$$B=(B_1, B_2, B_3, B_4),$$
where
\begin{itemize}
\item $B_1 = (\alpha^3, \alpha^2\delta)\otimes(\beta, \gamma)\otimes(\beta, \delta)\otimes(\gamma, \delta)$,
\item $B_2 = (\alpha^2, \alpha\delta)\otimes(\beta\gamma, \gamma^2)\otimes(\beta, \delta)\otimes(\gamma, \delta)$,
\item $B_3 = (\alpha^2\beta, \alpha\beta\delta)\otimes(\beta, \gamma)\otimes(\beta, \delta)\otimes(\gamma, \delta)$,
\item $B_4 = (\alpha\beta\gamma, \beta\gamma\delta)\otimes(\beta, \gamma)\otimes(\beta, \delta)\otimes(\gamma, \delta)$.
\end{itemize}

Moreover, $B = A \otimes C$, where $C$ is a vector whose components are the roots of the characteristic polynomial of $c$ and
$$A = (\omega_1, \omega_2, \omega_3, \omega_4),$$

with $(\omega_1, \omega_2, \omega_3, \omega_4)$ a certain permutation of $(\alpha, \beta, \gamma, \delta)$. Thus, we can write
$$B = (\omega_1 C, \omega_2 C, \omega_3 C, \omega_4 C),$$
i.e., $B_1$, $B_2$, $B_3$, and $B_4$ are multiple of $C$. Considering

$$C = (\alpha^2, \alpha\delta)\otimes(\beta, \gamma)\otimes(\beta, \delta)\otimes(\gamma, \delta),$$

we have $B_1 = \alpha C$, $B_2 = \gamma C$, $B_3 = \beta C$, $B_4 = \delta \cdot \cfrac{\beta\gamma}{\alpha\delta} C$. Thus, we have $\omega_1 = \alpha$, $\omega_2 = \gamma$, $\omega_3 = \beta$ and $\omega_4$ mus be equals to $\delta$, i.e., we must have $\alpha\delta = \beta\gamma$, but this is equivalent to say that the characteristic polynomial of $a$ must be of the form \eqref{eq:pol}.
 
\end{proof} 

Now, we see that any standard LDS can be factorized as
a product of two LDS of order 2 over $\mathbb{C}$ .

\begin{definition} Given the sequences $(u_{n})_{n=0}^{+\infty},(v_{n})_{n=0}^{+\infty},(s_{n})_{n=0}^{+\infty},(t_{n})_{n=0}^{+\infty}$
over a ring $\mathcal{R}$, we say that the product sequences $(u_{n}v_{n})_{n=0}^{+\infty}$
and $(s_{n}t_{n})_{n=0}^{+\infty}$ are \emph{equivalent} if 
\[
u_{n}=\lambda^{n-1}s_{n},\quad v_{n}=\lambda^{1-n}t_{n}
\]
where $\lambda\in\mathcal{R}$ is a unit. \end{definition}

\begin{theorem} Let $a=(a_{n})_{n=0}^{\infty}$ be a standard LDS
over $\mathbb{Z}$, then $a_{n}=b_{n}c_{n}$, for all $n\geq0$, where
$b=(b_{n})_{n=0}^{\infty}$ and $c=(c_{n})_{n=0}^{\infty}$ are LDSs
of order 2 over $\mathbb{C}$ with initial conditions $0,1$ and characteristic
polynomials 
\[
\begin{cases}
x^{2}-\cfrac{\sqrt{q+4r+2p\sqrt{r}}\pm\sqrt{q+4r-2p\sqrt{r}}}{2\sqrt{r}}x+1\\
x^{2}-\cfrac{\sqrt{q+4r+2p\sqrt{r}}\mp\sqrt{q+4r-2p\sqrt{r}}}{2}x+r
\end{cases}
\]
when $p\neq0$. Moreover when $p=0$ and $q+4r\neq0,$ $q\neq0$ (to
avoid degenerate cases) we have the two possible families of characteristic
polynomials for $b$ and $c$ given by

\[
\begin{cases}
x^{2}+1\\
x^{2}-\sqrt{q+4r}x+r
\end{cases},\quad\begin{cases}
x^{2}+1\\
x^{2}-\sqrt{q}x-r
\end{cases}
\]
These are all the families of not equivalent factorizations of $a$
over $\mathbb{C}$. \end{theorem}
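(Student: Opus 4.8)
The plan is to invert the product formula established in the first theorem of this section. Denote by $\alpha,\beta,\gamma,\delta$ the roots of the standard polynomial \eqref{eq:pol}; since the substitution $x\mapsto r/x$ leaves \eqref{eq:pol} invariant up to the factor $r^{2}/x^{4}$, the roots pair off into products equal to $r$, and by the previous theorem we may label them so that $\alpha\delta=\beta\gamma=r$. A factorization $a_{n}=b_{n}c_{n}$ with $b,c$ order-2 LDSs with initial conditions $0,1$ and characteristic polynomials $x^{2}-h_{1}x+k_{1}$ and $x^{2}-h_{2}x+k_{2}$ means, via Lemma \ref{kron} and Remark \ref{rem:kron}, that the four roots of \eqref{eq:pol} coincide (as a multiset) with $(\mu,\nu)\otimes(\sigma,\tau)=(\mu\sigma,\mu\tau,\nu\sigma,\nu\tau)$, where $\mu,\nu$ and $\sigma,\tau$ are the roots of the two quadratics. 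Thus the problem reduces to arranging $\alpha,\beta,\gamma,\delta$ in a $2\times 2$ multiplication table and reading off the marginals $h_{1}=\mu+\nu$, $k_{1}=\mu\nu$, $h_{2}=\sigma+\tau$, $k_{2}=\sigma\tau$.

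Next I would write down the four equations obtained by matching the elementary symmetric functions of $\{\mu\sigma,\mu\tau,\nu\sigma,\nu\tau\}$ against the coefficients of \eqref{eq:pol}, namely $h_{1}h_{2}=p$, $k_{1}k_{2}h_{1}h_{2}=pr$, $k_{1}^{2}k_{2}^{2}=r^{2}$ and $k_{2}h_{1}^{2}+k_{1}h_{2}^{2}-2k_{1}k_{2}=q+2r$. The third equation forces $k_{1}k_{2}=\pm r$, and when $p\neq 0$ the second equation selects $k_{1}k_{2}=r$. To pin down the individual parameters I would use the equivalence relation: replacing $(b_{n},c_{n})$ by $(\lambda^{n-1}b_{n},\lambda^{1-n}c_{n})$ with $\lambda$ a unit multiplies $(h_{1},k_{1})$ by $(\lambda,\lambda^{2})$ and $(h_{2},k_{2})$ by $(\lambda^{-1},\lambda^{-2})$ while fixing $a_{n}$, $h_{1}h_{2}$ and $k_{1}k_{2}$. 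Hence every factorization is equivalent to one with $k_{1}=1$, whence $k_{2}=r$, and the remaining relations collapse to $h_{1}h_{2}=p$ together with $rh_{1}^{2}+h_{2}^{2}=q+4r$. Eliminating $h_{2}=p/h_{1}$ gives the quadratic $rh_{1}^{4}-(q+4r)h_{1}^{2}+p^{2}=0$, whose two solutions for $h_{1}^{2}$ produce exactly the two characteristic polynomials in the statement (the $\pm$ and $\mp$); the residual ambiguity in the sign of $h_{1}$ is precisely the case $\lambda=-1$ and so yields no new factorization.

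For the degenerate case $p=0$ I would normalize instead to $h_{1}=0$ and $k_{1}=1$, i.e. $b$ with characteristic polynomial $x^{2}+1$; now $h_{1}h_{2}=0$ makes the relation $k_{1}k_{2}h_{1}h_{2}=pr$ vacuous, so both signs $k_{1}k_{2}=\pm r$ survive. Feeding $k_{2}=r$ and $k_{2}=-r$ into $h_{2}^{2}=q+2r+2k_{2}$ gives $c$ with characteristic polynomial $x^{2}-\sqrt{q+4r}\,x+r$ and $x^{2}-\sqrt{q}\,x-r$, the two families of the statement, the hypotheses $q+4r\neq 0$ and $q\neq 0$ ruling out the degenerate subcases. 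Finally, for completeness: once the diagonal pairing $\{\alpha,\delta\},\{\beta,\gamma\}$ is forced by $\alpha\delta=\beta\gamma$, the only remaining freedom in the multiplication table is the placement of $\beta,\gamma$ in the two off-diagonal cells, and the two placements are exactly the two roots of the quadratic above; every other legal arrangement differs from one of these by a scaling $\lambda$ and is therefore equivalent. I expect the main obstacle to be precisely this last bookkeeping — proving that the interplay between the pairing structure and the equivalence relation leaves exactly the listed families and nothing more, and cleanly isolating the extra family that the vanishing of $p$ creates.
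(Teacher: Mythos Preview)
Your proposal is correct and follows essentially the same line as the paper's proof: set up the Kronecker-product matching conditions $h_1h_2=p$, $(k_1k_2)^2=r^2$, $k_1k_2h_1h_2=pr$, $k_2h_1^2+k_1h_2^2=q+4r$, split into the cases $p\neq 0$ and $p=0$, and use the equivalence relation to kill the free parameter $k_1$. The only cosmetic difference is the order of operations: you normalize $k_1=1$ first and then solve a clean quartic $rh_1^4-(q+4r)h_1^2+p^2=0$, whereas the paper keeps $k_1$ as a parameter throughout, solves for $h_1,h_2$ in terms of $k_1$, and only afterwards invokes the equivalence with $\lambda=\pm\sqrt{k_1}$ to reduce to the stated polynomials; both routes land on the same two $(\pm,\mp)$ families, and your treatment of the $p=0$ case (both signs of $k_1k_2$ surviving, giving the two extra families) matches the paper's exactly.
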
 \begin{proof} We want to factorize
a standard polynomial into the Kronecker product of two polynomials
of degree 2, i.e., we want to find $h_{1},h_{2},k_{1},k_{2}$ such
that 
\[
(x^{2}-h_{1}x+k_{1})\otimes(x^{2}-h_{2}x+k_{2})=x^{4}-px^{3}+(q+2r)x^{2}-px+r^{2}.
\]
Let us observe that the characteristic polynomial of $a$ must have
distinct non zero roots in order to guarantee that $a$ is a LDS of
order 4 . Let $\gamma_{1},\gamma_{2}$ and $\sigma_{1},\sigma_{2}$
be the roots of $x^{2}-h_{1}x+k_{1}$ and $x^{2}-h_{2}x+k_{2}$, respectively.
We have 
\begin{equation}
\begin{cases}
(\gamma_{1}+\gamma_{2})(\sigma_{1}+\sigma_{2})=p\\
(\gamma_{1}^{2}+\gamma_{2}^{2})\sigma_{1}\sigma_{2}+\gamma_{1}\gamma_{2}(\sigma_{1}+\sigma_{2})^{2}=q+2r\\
\gamma_{1}\gamma_{2}\sigma_{1}\sigma_{2}(\gamma_{1}+\gamma_{2})(\sigma_{1}+\sigma_{2})=pr\\
(\gamma_{1}\gamma_{2}\sigma_{1}\sigma_{2})^{2}=r^{2}
\end{cases}\label{eq:sist}
\end{equation}
When $p\neq0$ these conditions are equivalent to the system 
\begin{equation}
\begin{cases}
k_{1}k_{2}=r\\
h_{1}h_{2}=p\\
h_{1}^{2}k_{2}+h_{2}^{2}k_{1}=q+4r
\end{cases}\label{sist1}
\end{equation}
which is a particular case of 
\[
\begin{cases}
k_{1}k_{2}=A\\
h_{1}h_{2}=B\\
h_{1}^{2}k_{2}+h_{2}^{2}k_{1}=C
\end{cases}
\]
where $A\not=0$ since we suppose that the standard polynomial has
not zero roots. Thus, we can obtain 
\[
A\left(\cfrac{h_{1}^{2}}{k_{1}}\right)^{2}-C\left(\cfrac{h_{1}^{2}}{k_{1}}\right)+B^{2}=0
\]
from which we have 
\[
h_{1}=\pm\sqrt{k_{1}}\cfrac{\sqrt{C+2B\sqrt{A}}\pm\sqrt{C-2B\sqrt{A}}}{2\sqrt{A}}
\]
and 
\[
h_{2}=\pm\cfrac{\sqrt{C+2B\sqrt{A}}\mp\sqrt{C-2B\sqrt{A}}}{2\sqrt{k_{1}}}.
\]
Thus solutions of system \ref{sist1} are 
\[
\begin{cases}
h_{1}=\pm\sqrt{k_{1}}\cfrac{\sqrt{q+4r+2p\sqrt{r}}\pm\sqrt{q+4r-2p\sqrt{r}}}{2\sqrt{r}}\\
h_{2}=\pm\cfrac{\sqrt{q+4r+2p\sqrt{r}}\mp\sqrt{q+4r-2p\sqrt{r}}}{2\sqrt{k_{1}}}\\
k_{2}=\cfrac{r}{k_{1}}
\end{cases}.
\]
Let us pose 

$\lambda=\pm\sqrt{k_{1}}$, $s=\cfrac{\sqrt{q+4r+2p\sqrt{r}}\pm\sqrt{q+4r-2p\sqrt{r}}}{2\sqrt{r}},$,
$\bar{s}=\cfrac{\sqrt{q+4r+2p\sqrt{r}}\mp\sqrt{q+4r-2p\sqrt{r}}}{2}$. 

\noindent Thus, considering solutions of system \ref{sist1}, we
have $x^{2}-h_{1}x+k_{1}=x^{2}-s\lambda x+\lambda^{2}$ and $x^{2}-h_{2}x+k_{2}=x^{2}-\frac{\bar{s}}{\lambda}x+\frac{r}{\lambda^{2}}$,
whose roots are 
\[
\gamma_{1,2}=\lambda\left(\cfrac{s\pm\sqrt{s^{2}-4}}{2}\right),\quad\sigma_{1,2}=\cfrac{1}{\lambda}\left(\cfrac{\bar{s}\pm\sqrt{\bar{s}^{2}-4}}{2}\right).
\]
In this case, we have $u_{n}=\lambda^{n-1}b_{n}$ and $v_{n}=\lambda^{1-n}c_{n}$,
where $b$ and $c$ are Lucas sequences with characteristic polynomials
$x^{2}-sx+1$ and $x^{2}-\bar{s}x+r$, respectively. When $p=0$ in
conditions (\ref{eq:sist}) we may suppose $\gamma_{1}+\gamma_{2}=h_{1}=0$
and find the two systems

\[
\begin{cases}
h_{1}=0\\
k_{1}k_{2}=r\\
h_{2}^{2}k_{1}=p+4r
\end{cases},\quad\begin{cases}
h_{1}=0\\
k_{1}k_{2}=-r\\
h_{2}^{2}k_{1}=p
\end{cases}
\]
with respective solutions

\[
\begin{cases}
h_{1}=0\\
h_{2}=\pm\sqrt{\frac{p+4r}{k_{1}}}\\
k_{2}=\frac{r}{k_{1}}
\end{cases},\quad\begin{cases}
h_{1}=0\\
h_{2}=\pm\sqrt{\frac{p}{k_{1}}}\\
k_{2}=-\frac{r}{k_{1}}
\end{cases}
\]
which give, with analogous considerations as in the case $p\neq0$,
with $\lambda=\pm\sqrt{k_{1}}$, the two families of characteristic
polynomials for $b$ and $c$ related to this case. \end{proof}\begin{remark}
It would be interesting to find when previous factorizations determine
sequences in $\mathbb{Z}$ or $\mathbb{Z}[i]$. \end{remark} In the
next section, we see a new connection between LDS of order 4 and Salem
numbers.

\section{Construction of linear divisibility sequences by means of Salem numbers
of order 4}

\label{sec:salem} The Salem numbers have been introduced in 1944
by Raphael Salem \cite{Salem} and they are closely related to the
Pisot numbers \cite{Pisot}. There are several results regarding Pisot
numbers and recurrence sequences \cite{Boyd0}, \cite{Boyd}, \cite{Boyd1}.
In the following, we relate Salem numbers and LDS.

There are many equivalent definitions of Salem numbers, here we report
the following one.

\begin{definition} A \emph{Salem number} is an algebraic integer
$\tau>1$ of degree $d\geq4$ such that all the conjugate elements
belong to the unitary circle, unless $\tau$ and $\tau^{-1}$. \end{definition}

In the following, we work on Salem numbers of degree 4, which can
be characterized as follows (see \cite{Bertin}, pag. 81). \begin{proposition}
The Salem numbers of degree 4 are all the real roots $\tau>1$, of
the following polynomials with integer coefficients 
\[
x^{4}+tx^{3}+cx^{2}-tx+1
\]
where 
\[
2(t-1)<c<-2(t+1).
\]
\end{proposition}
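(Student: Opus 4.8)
The plan is to exploit the rigid structure of the Galois conjugates of a degree-4 Salem number and to collapse the quartic to a quadratic via the classical trace substitution, the whole argument being self-contained (it does not use the Kronecker-product machinery of Section~\ref{sec:standard}). First I would record that if $\tau>1$ is a Salem number of degree $4$, then by definition its four conjugates are $\tau$, $\tau^{-1}$ and a complex-conjugate pair $\omega,\overline{\omega}$ lying on the unit circle; since $|\omega|=1$ forces $\overline{\omega}=\omega^{-1}$, the conjugate set $\{\tau,\tau^{-1},\omega,\omega^{-1}\}$ is stable under $z\mapsto z^{-1}$. Hence the monic integer minimal polynomial of $\tau$ is reciprocal and factors over $\mathbb{R}$ as
\[
(x^{2}-sx+1)(x^{2}-ux+1),
\]
with $s=\tau+\tau^{-1}$ and $u=\omega+\overline{\omega}=2\cos\theta$. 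Expanding produces a reciprocal integer quartic of the displayed shape, and the constant term $1$ is exactly the value of $\tau\cdot\tau^{-1}\cdot\omega\cdot\overline{\omega}$; so only the two parameters $t$ and $c$ are free, with $t=-(s+u)$ and $c=su+2$.

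Second, I would pass to the variable $y=x+x^{-1}$. Dividing the quartic by $x^{2}$ turns it into the quadratic $g(y)=y^{2}+ty+(c-2)=(y-s)(y-u)$, whose two roots are precisely $s=\tau+\tau^{-1}$ and $u=2\cos\theta$. The Salem condition now becomes a clean root-location statement for $g$: because $\tau>1$ we have $s>2$, while $|\omega|=1$ with $\omega\notin\mathbb{R}$ gives $u\in(-2,2)$. Thus $g$ must have one root strictly greater than $2$ and one root strictly inside $(-2,2)$.

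Third, I would convert this requirement into the announced inequality by the usual sign test for a monic quadratic: having one root $>2$ and one root in $(-2,2)$ is equivalent to $g(2)<0$ together with $g(-2)>0$. Computing $g(2)=c+2t+2$ and $g(-2)=c-2t+2$ yields $c<-2(t+1)$ and $c>2(t-1)$, that is $2(t-1)<c<-2(t+1)$. The \emph{strictness} of these inequalities is exactly what guarantees $s\neq 2$ (so $\tau\neq 1$) and $u\neq\pm 2$ (so the pair $\omega,\overline{\omega}$ is genuinely non-real and on the circle rather than at $\pm1$). For the converse I would run the chain backwards: the inequalities force a root $y_{1}>2$ and a root $y_{2}\in(-2,2)$ of $g$; solving $x+x^{-1}=y_{1}$ produces a reciprocal real pair $\tau,\tau^{-1}$ with $\tau=\tfrac{1}{2}(y_{1}+\sqrt{y_{1}^{2}-4})>1$, while $x+x^{-1}=y_{2}$ with $|y_{2}|<2$ produces a conjugate pair of modulus $1$.

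The part I expect to be the main obstacle is \emph{not} the root counting but the passage from ``real root $\tau>1$ of the quartic'' to ``Salem number of degree exactly $4$''. One must still ensure that the quartic is irreducible over $\mathbb{Q}$, so that $\tau$ really has degree $4$ and the quartic is its minimal polynomial, and one must exclude the degenerate situation in which the unit-circle pair $\omega,\overline{\omega}$ consists of roots of unity (which would make the quartic reducible and $\tau$ a unit of smaller degree rather than a genuine Salem number). I would dispose of these cases by restricting to the irreducible members of the family and by noting that the finitely many degeneracies — corresponding to $\omega$ a root of unity, hence $u=2\cos\theta$ taking one of finitely many rational values inside $(-2,2)$ — are explicitly listable and removable, leaving precisely the real roots $\tau>1$ of the integer quartics satisfying $2(t-1)<c<-2(t+1)$ as the degree-4 Salem numbers.
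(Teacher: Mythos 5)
Your argument is correct in substance, but note first that the paper offers no proof of this proposition at all: it is quoted from the reference of Bertin et al.\ (page 81), so there is no internal proof to compare against. Your route --- reciprocity of the minimal polynomial, the trace substitution $y=x+x^{-1}$ collapsing the quartic to $g(y)=y^{2}+ty+(c-2)=(y-s)(y-u)$, and the sign test $g(2)<0$, $g(-2)>0$ --- is the standard derivation, and your computations are right: $g(2)=c+2t+2<0$ gives $c<-2(t+1)$ and $g(-2)=c-2t+2>0$ gives $c>2(t-1)$, with strictness encoding $s>2$ and $u\in(-2,2)$ exactly as you say.

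Two remarks. First, a sign issue you passed over silently: the polynomial as displayed in the statement, $x^{4}+tx^{3}+cx^{2}-tx+1$, is \emph{not} reciprocal, while your own expansion $(x^{2}-sx+1)(x^{2}-ux+1)=x^{4}+tx^{3}+cx^{2}+tx+1$ (with $t=-(s+u)$, $c=su+2$) has $+tx$. The ``$-tx$'' is a typo in the paper --- its own identification with the standard polynomial ($p=-t$, $q=c-2$, $r=1$) yields linear coefficient $-prx=+tx$ --- and your derivation, which reproduces the stated inequalities, confirms the reciprocal form is the intended one; you should flag this rather than describe the displayed shape as reciprocal. Second, your handling of the main obstacle (irreducibility) is right in spirit but imprecise in detail: the degenerate members of the family are not ``finitely many degeneracies.'' Reducibility over $\mathbb{Q}$ can only occur via the factorization $(x^{2}-sx+1)(x^{2}-ux+1)$ with both factors rational (any other pairing of the roots produces non-real coefficients, and the strict inequalities give $P(1)<0<P(-1)$, excluding linear factors), which forces $u\in(-2,2)\cap\mathbb{Z}=\{-1,0,1\}$, i.e.\ it happens exactly on the three lines $c=t+1$, $c=2$, $c=1-t$ in the $(t,c)$-plane. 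These contain infinitely many admissible pairs: for instance $t=-3$, $c=2$ satisfies $2(t-1)<c<-2(t+1)$, yet the quartic is $(x^{2}+1)(x^{2}-3x+1)$, whose root $\tfrac{1}{2}\left(3+\sqrt{5}\right)>1$ has degree $2$ and is not a Salem number. So the proposition genuinely requires restricting to irreducible members of the family, as you propose; and once irreducibility is imposed, your separate worry about $\omega$ being a root of unity evaporates automatically, since a root of unity among the conjugates would make the minimal polynomial cyclotomic and force $|\tau|=1$.
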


It is immediate to see that previous polynomials are standard polynomials
for $p=-t$, $q=-2+c$, $r=1$.

\begin{definition} We call \emph{Salem standard polynomials} the
polynomials 
\[
x^{4}-px^{3}+(q+2)x^{2}-px+1
\]
with 
\[
2(-p-1)<2+q<-2(-p+1).
\]
\end{definition}

The study of the distribution modulo 1 of the powers of a given real
number greater than 1 is a rich and classic research field (see, e.g,
\cite{Kok}). In the following, we use the same notation of \cite{Bertin}
(pag. 61).

\begin{definition} Given a real number $\alpha$, let $E(\alpha)$
be the nearest integer to $\alpha$, i.e., $\alpha=E(\alpha)+\epsilon(\alpha)$
where $\epsilon(\alpha)\in[-\frac{1}{2},\frac{1}{2}]$ is called \emph{$\alpha$
modulo 1}. \end{definition}

In the original work of Salem \cite{Salem}, he proved that if $\alpha$
is a Pisot number, then $\alpha^{n}$ modulo 1 tends to zero and if
$\alpha$ is a Salem number, then $\alpha^{n}$ modulo 1 is dense
in the unit interval. Further results on the distribution modulo 1
of the Salem numbers can be found, e.g., in \cite{Aki} and \cite{Zaimi2}.
Moreover, integer and fractional parts of Pisot and Salem numbers
have been studied, e.g., in \cite{Dub} and \cite{Zaimi}.

Let $\mathcal{R}\subseteq\mathbb{C}$ be a ring and $\alpha\in\mathcal{R}$
with $\alpha\not\in\mathcal{R}^{*}$, then the sequence $(\alpha^{n})_{n=0}^{\infty}$
is clearly a LDS. Given a couple of irrational numbers $\lambda$
and $\alpha$, it is interesting to study when the sequence $(E(\lambda\alpha^{n}))_{n=0}^{\infty}$
is a LDS.

\begin{example} If we consider $\frac{1}{\sqrt{5}}$ and the golden
mean $\phi$, it is well--known that 
\[
E\left(\frac{1}{\sqrt{5}}\phi^{n}\right)=F_{n},
\]
where $F_{n}$ is the $n$--th Fibonacci number, consequently we get
a LDS. \end{example}

Let $g(x)$ be a Salem standard polynomial, $g(x)$ has real roots
$\alpha>1$, $\alpha^{-1}$ and complex roots $\gamma$, $\gamma^{-1}$
with norm 1. Let $(u_{n})_{n=0}^{\infty}$ be a standard LDS with
characteristic polynomial $g(x)$. By the Binet formula, there exist
$\lambda,\lambda_{1},\lambda_{2},\lambda_{3}$ such that 
\[
u_{n}=\lambda\alpha^{n}+\lambda_{1}\alpha^{-n}+\lambda_{2}\gamma^{n}+\lambda_{3}\gamma^{-n}.
\]
Since 
\[
\lvert u_{n}-\lambda\alpha^{n}\rvert\geq\lvert\lambda_{1}\alpha^{-n}\rvert+\lvert\lambda_{2}\rvert+\lvert\lambda_{3}\rvert,
\]
for all $\epsilon>0$, with $n$ sufficiently large, we have 
\[
\lvert u_{n}-\lambda\alpha^{n}\rvert\geq\epsilon+\lvert\lambda_{2}\rvert+\lvert\lambda_{3}\rvert.
\]
Thus, if $\lvert\lambda_{2}\rvert+\lvert\lambda_{3}\rvert<\frac{1}{2}$,
there exists $n_{0}$ such that 
\[
u_{n}=E(\lambda\alpha^{n}),\quad\forall n>n_{0}
\]
and if $\lvert\lambda_{1}\alpha^{-1}\rvert+\lvert\lambda_{2}\rvert+\lvert\lambda_{3}\rvert<\frac{1}{2}$,
then 
\[
u_{n}=E(\lambda\alpha^{n}),\quad\forall n\geq1.
\]

An interesting case is given by the Salem standard polynomial 
\[
x^{4}-tx^{3}+tx^{2}-tx+1
\]
for $t\geq6$. In this case, we have the Salem numbers 
\[
\alpha=\cfrac{1}{4}\left(t+\sqrt{(t-4)t+8}+\sqrt{2}\sqrt{t(t+\sqrt{(t-4)t+8}-2)-4}\right)
\]
and 
\[
\lambda=\cfrac{1}{\sqrt{(t-4)t+8}}.
\]
Thus, we can determine infinitely many LDSs generated by powers of
a Salem number, specifically the sequences 
\[
(\theta_{n}(t))_{n=1}^{\infty}=E(\lambda\alpha^{n}),\quad\forall t\geq6\in\mathbb{Z}
\]
For example, when $t=6$ we have the LDS 
\[
1,6,29,144,725,3654,18409,...
\]
when $t=7$, we have 
\[
1,7,41,245,8897,53621,...
\]
These sequences appear to be new, since they are not listed in OEIS
\cite{oeis}. Moreover, as a consequence, we have the following property
on Salem numbers, i.e., 
\[
d|n\Rightarrow E(\lambda\alpha^{d})|E(\lambda\alpha^{n}).
\]

Finally, in the following proposition we characterize all the Salem
standard polynomials that produces LDSs of this kind.

\begin{proposition} With the above notation, if $\lvert\lambda_{1}\alpha^{-1}\rvert+\lvert\lambda_{2}\rvert+\lvert\lambda_{3}\rvert<\frac{1}{2}$,
then the integer coefficients $p,q$ of $g(x)$ must satisfy the following
inequalities 
\[
\begin{cases}
2\leq p\leq8,\quad-4-2p<q<\cfrac{p^{4}+8p^{3}-160p-400}{4p^{2}+32p+64}\\
p>8,\quad-4-2p<q<-4+2p
\end{cases}
\]
\end{proposition}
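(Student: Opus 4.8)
The plan is to make the Binet coefficients $\lambda,\lambda_1,\lambda_2,\lambda_3$ completely explicit, reduce the stated inequality to a relation between $p$ and $q$, and then intersect that relation with the Salem range $-2p-4<q<2p-4$.

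First I would exploit the palindromic shape of $g$. Writing $w=x+x^{-1}$ one has $g(x)=x^2 h(w)$ with $h(w)=w^2-pw+q$, so differentiating and using $g(\rho)=0$ gives $g'(\rho)=(\rho^2-1)\,h'(w_\rho)$ for every root $\rho$, where $w_\rho=\rho+\rho^{-1}$. With the initial conditions $0,1,p,p^2-q-3$ the generating function of $u$ is $\tfrac{x-x^3}{x^4 g(1/x)}$, and a routine residue computation gives the Binet coefficient attached to $\rho$ as $\tfrac{\rho^2-1}{g'(\rho)}=\tfrac{1}{h'(w_\rho)}=\tfrac{1}{2w_\rho-p}$. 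Because $\alpha,\alpha^{-1}$ share $w=s:=\alpha+\alpha^{-1}$, while $\gamma,\gamma^{-1}$ share $w=\bar s:=\gamma+\gamma^{-1}$, and since $s+\bar s=p$, this collapses to the clean identities $\lambda=\lambda_1=\tfrac{1}{s-\bar s}$ and $\lambda_2=\lambda_3=\tfrac{1}{\bar s-s}$. Setting $D=s-\bar s=\sqrt{p^2-4q}$, this is $\lambda=\lambda_1=1/D$ and $\lambda_2=\lambda_3=-1/D$.

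With these values the inequality $|\lambda_1\alpha^{-1}|+|\lambda_2|+|\lambda_3|<\tfrac12$ becomes simply $\tfrac{\alpha^{-1}+2}{D}<\tfrac12$, i.e. $D>4+2\alpha^{-1}$. The key step is to locate the threshold, the value of $q$ at which equality $D=4+2\alpha^{-1}$ holds. Putting $\beta=\alpha^{-1}$ and combining $\beta+\beta^{-1}=s=\tfrac{p+D}{2}$ with $D=4+2\beta$, the square roots cancel and I expect the remarkably simple relation $\beta=\tfrac{2}{p+4}$, hence $\alpha=\tfrac{p+4}{2}$. Substituting back gives $s=\tfrac{p^2+8p+20}{2(p+4)}$, $\bar s=p-s=\tfrac{p^2-20}{2(p+4)}$, and finally $q=s\bar s=\tfrac{(p^2+8p+20)(p^2-20)}{4(p+4)^2}$, which expands to exactly the stated upper bound $\tfrac{p^4+8p^3-160p-400}{4p^2+32p+64}$.

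It remains to fix the direction and the case split. At fixed $p$, increasing $q$ decreases $D$ and increases $\alpha^{-1}$, so $\tfrac{\alpha^{-1}+2}{D}$ is monotone increasing in $q$; the inequality therefore holds precisely for $q$ below the threshold $q^\ast(p)$ just found, while the lower end stays at the Salem bound $-2p-4$. Comparing $q^\ast(p)$ with the Salem upper bound $2p-4$, clearing denominators turns $q^\ast(p)=2p-4$ into $(p+2)^2(p^2-4p-36)=0$, whose relevant root is $p=2+2\sqrt{10}\approx 8.32$. Hence for integer $p\le 8$ one has $q^\ast(p)<2p-4$ and the binding bound is $q^\ast(p)$, whereas for $p\ge 9$ (that is $p>8$) the binding bound is $2p-4=-4+2p$; the lower limit $p\ge 2$ is forced since for $p=1$ the admissible interval $(-2p-4,\,q^\ast(p))$ contains no integer. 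The main obstacle I anticipate is exactly this final bookkeeping: converting the single real threshold $2+2\sqrt{10}$ into the integer dichotomy at $p=8$, and verifying that the extreme values of $p$ do or do not admit an integer $q$.
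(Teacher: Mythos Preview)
Your argument is correct, and it is genuinely different from --- and considerably cleaner than --- what the paper does. The paper computes the Binet coefficients in the form $\lambda=\lambda_1=\dfrac{\alpha\gamma}{(\alpha-\gamma)(\alpha\gamma-1)}$, $\lambda_2=\lambda_3=-\lambda$, rewrites the hypothesis as a lower bound on $|(\alpha-\gamma)(\alpha\gamma-1)|$, then sets $\gamma=a+ib$ and reduces (after squaring) to a quartic inequality in $\alpha$ with parameter $a=\tfrac{p-\sqrt{p^2-4q}}{4}$; solving that quartic and substituting the explicit radical for $\alpha$ produces the stated ranges. Your route exploits the palindromic structure from the start: the substitution $w=x+x^{-1}$ collapses the four Binet coefficients to $\pm 1/D$ with $D=\sqrt{p^2-4q}$, turning the hypothesis into the single inequality $D>4+2\alpha^{-1}$. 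The threshold computation is then essentially linear --- combining $D=4+2\beta$ with $\beta+\beta^{-1}=\tfrac{p+D}{2}$ immediately gives $\alpha=\tfrac{p+4}{2}$ and hence $q^\ast(p)=s\bar s$ in closed form --- whereas the paper needs the quartic and a final inequality in nested radicals. Your monotonicity argument (larger $q$ shrinks $D$ and enlarges $\alpha^{-1}$) and the factorization $(p+2)^2(p^2-4p-36)$ that locates the crossover at $p=2+2\sqrt{10}$ make the integer dichotomy at $p=8$ transparent; the check that $(-2p-4,\,q^\ast(p))$ is empty of integers for $p=1$ justifies the lower bound $p\ge 2$. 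Both approaches reach the same answer, but yours avoids the complex-variable bookkeeping and the brute-force quartic, at the price of first verifying the generating-function identity $N(x)=x-x^3$ for the given initial data.
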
 \begin{proof} The real root $\alpha>1$ of $g(x)$
can be written as 
\[
\alpha=\cfrac{1}{4\left(p+\sqrt{p^{2}-4q}+\sqrt{(p+\sqrt{p^{2}-4q})^{2}-16}\right)}.
\]
Moreover, by the Binet formula 
\[
\lambda=\lambda_{1}=\cfrac{\alpha\gamma}{(\alpha-\gamma)(\alpha\gamma-1)},\quad\lambda_{2}=\lambda_{3}=-\cfrac{\alpha\gamma}{(\alpha-\gamma)(\alpha\gamma-1)}.
\]
Thus, from $\lvert\lambda_{1}\alpha^{-1}\rvert+\lvert\lambda_{2}\rvert+\lvert\lambda_{3}\rvert<\frac{1}{2}$
we get 
\[
\lvert(\alpha-\gamma)(\alpha\gamma-1)\rvert>2\alpha+2.
\]
Posing $\gamma=a+ib$, with some calculations we find 
\[
\alpha^{4}-4a\alpha^{3}+2(2a^{2}-7)\alpha^{2}-4(a+4)\alpha-3>0
\]
from which we have 
\[
\alpha>2+a+\sqrt{(a+2)^{2}+1}
\]
since $-1<a<1$ and $\alpha>1$. Using the explicit expression of
$\alpha$ and that $a=\frac{p-\sqrt{p^{2}-4q}}{4}$, we finally obtain
\[
\frac{1}{4}(p+\sqrt{-16+(-p-\sqrt{p^{2}-4q})^{2}}+\sqrt{p^{2}-4q})>2+\frac{p}{4}+\sqrt{1+\frac{1}{16}(8+p-\sqrt{p^{2}-4q})^{2}}-\frac{1}{4}\sqrt{p^{2}-4q},
\]
whose solutions are 
\[
\begin{cases}
2 \leq p\leq8,\quad-4-2p<q<\cfrac{p^{4}+8p^{3}-160p-400}{4p^{2}+32p+64}\\
p>8,\quad-4-2p<q<-4+2p
\end{cases}.
\]
\end{proof}

\end{document}